\newtheorem{theorem}{Theorem}[section]
\newtheorem{lemma}[theorem]{Lemma}
\def\11{\textbf{$1$}}
\begin{document}

\title{Bilocal $^*$-automorphisms of $B(H)$ satisfying the 3-local property}

\author[Ben Ali Essaleh]{Ahlem Ben Ali Essaleh}
\email{ahlem.benalisaleh@gmail.com}
\address{Faculte des Sciences de Monastir, Département de Mathématiques, Avenue de L'environnement, 5019 Monastir, Tunisia}
\curraddr{Departamento de An{\'a}lisis Matem{\'a}tico, Facultad de
Ciencias, Universidad de Granada, 18071 Granada, Spain.}

\author[Niazi]{Mohsen Niazi}
\email{mhsniazi@yahoo.com}
\address{Faculty of Mathematical Sciences in University of Birjand, Birjand, Iran}
\curraddr{Departamento de An{\'a}lisis Matem{\'a}tico, Facultad de
Ciencias, Universidad de Granada, 18071 Granada, Spain.}

\author[Peralta]{Antonio M. Peralta}
\address{Departamento de An{\'a}lisis Matem{\'a}tico, Facultad de
Ciencias, Universidad de Granada, 18071 Granada, Spain.}
\curraddr{Visiting Professor at Department of Mathematics, College of Science, King Saud University, P.O.Box 2455-5, Riyadh-11451, Kingdom of Saudi Arabia.}
\email{aperalta@ugr.es}

\thanks{Authors partially supported by the Spanish Ministry of Science and Innovation, D.G.I. project no. MTM2011-23843, and Junta de Andaluc\'{\i}a grant FQM375. Third author partially supported by the Deanship of Scientific Research at King Saud University (Saudi Arabia) research group no. RG-1435-020. The first and second authors acknowledge the partial financial support from the IEMath-GR program for visits of young talented researchers. First author partially supported by the Higher Education And Scientific Research In Tunisia project UR11ES52.}

\subjclass[2000]{Primary 47B49; 47B48  Secondary 15A86; 47L10.}


\begin{abstract} We prove that, for a complex Hilbert space $H$ with dimension bigger or equal than three,  every linear mapping $T: B(H)\to B(H)$ satisfying the 3-local property is a $^*$-monomorphism, that is, every linear mapping $T: B(H) \to B(H)$ satisfying that for every $a$ in $B(H)$ and every $\xi,\eta$ in $H$, there exists a $^*$-automorphism $\pi_{a,\xi,\eta}: B(H)\to B(H)$, depending on $a$, $\xi$, and $\eta$, such that $$T(a) (\xi) = \pi_{a,\xi,\eta} (a) (\xi), \hbox{ and } T(a) (\eta) = \pi_{a,\xi,\eta} (a) (\eta),$$ is a $^*$-monomorphism. This solves a question posed by L. Molnár in [\emph{Arch. Math.} \textbf{102}, 83-89 (2014)].
\end{abstract}

\keywords{bilocal $^*$-automorphism; 3-local property; extreme-strong-local $^*$-automorphism; $^*$-monomorphism; unitary equivalence; bilocal unitary equivalence}

\maketitle
 \thispagestyle{empty}

\section{Introduction and preliminaries}

In a recent contribution (cf. \cite{Mol2014}), L. Moln\'{a}r devotes a note to release and develop the notion of \emph{bilocal $^*$-automorphisms} on $B(H)$,  a notion which was inspired by the concept of bilocal derivations due to Xiong and Zhu (see \cite{ZhuXiong97}). According to \cite{Mol2014}, a linear mapping $T: B(H) \to B(H)$ is said to be a \emph{bilocal $^*$-automorphism} if for every $a$ in $B(H)$ and every $\xi$ in $H$, there exists a $^*$-automorphism $\pi_{a,\xi}: B(H)\to B(H)$, depending on $a$ and $\xi$, such that $T(a) (\xi) = \pi_{a,\xi} (a) (\xi)$.\smallskip

We recall that a linear mapping $T$ on a C$^*$-algebra $A$ is said to be a \emph{local $^*$-automorphism} if for every $a$ in $A$, there exists a $^*$-automorphism $\pi_{a}: A\to A$, depending on $a$, such that $T(a)= \pi_{a,\xi} (a)$ (compare \cite{LarSou} and \cite{BreSemrl95}). Clearly, every local  $^*$-automorphism on $B(H)$ is a bilocal  $^*$-automorphism. \smallskip

The most significant results established by Moln\'{a}r in \cite{Mol2014} can be subsumed in the following:

\begin{theorem}\label{t Molnar 2014} \cite[Theorem 1]{Mol2014} Let $H$ be an infinite dimensional and separable complex Hilbert space. For a linear transformation $T: B(H)\to B(H)$ the following two assertions are equivalent:\begin{enumerate}[$(a)$]\item $T$ is a bilocal $^*$-automorphism, that is, for every $a\in B(H)$ and $\xi \in H$, there exists an algebra $^*$-automorphism $\pi_{a,\xi}$ of $B(H)$ such that $T(a) (\xi) = \pi_{a,\xi} (a) (\xi)$;
\item $T$ is a unital algebra $^*$-endomorphism of $B(H)$.
\end{enumerate} Moreover, if $H$ is finite-dimensional, then $(a)$ holds if and only if $T$ is either an algebra
$^*$-automorphism or an algebra $^*$-anti-automorphism of $B(H)$.$\hfill\Box$
\end{theorem}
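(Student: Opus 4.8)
The plan is to treat the equivalence $(a)\Leftrightarrow(b)$ by proving the substantial implication $(a)\Rightarrow(b)$ in several stages, and then to dispatch the converse and the finite-dimensional addendum. The starting point is the classical fact that every $^{*}$-automorphism of $B(H)$ is inner, so the bilocal hypothesis provides, for each $a\in B(H)$ and each $\xi\in H$, a unitary $u_{a,\xi}$ with $T(a)(\xi)=u_{a,\xi}\,a\,u_{a,\xi}^{*}(\xi)$. Taking $a=1$ yields $T(1)=1$, so $T$ is unital; and for a unit vector $\xi$ we get $\langle T(a)\xi,\xi\rangle=\langle a\,\eta,\eta\rangle$ with $\eta:=u_{a,\xi}^{*}\xi$ again a unit vector, so $\langle T(a)\xi,\xi\rangle$ lies in the numerical range of $a$. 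Applying this to self-adjoint $a$ shows $\langle T(a)\xi,\xi\rangle\in\mathbb{R}$ for every $\xi$, whence $T$ preserves self-adjointness; applying it to $a\ge 0$ gives $T(a)\ge 0$. Thus $T$ is a unital, positive, hence contractive, self-adjoint linear map.

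The crux is to prove that $T$ sends projections to projections. Fix a projection $P$ and a unit vector $\xi$, and write $Q:=u_{P,\xi}\,P\,u_{P,\xi}^{*}$, again a projection, so that $T(P)\xi=Q\xi$ as vectors. Since $Q=Q^{*}=Q^{2}$ and $T(P)=T(P)^{*}$, we obtain
\[
\langle T(P)\xi,\xi\rangle=\langle Q\xi,\xi\rangle=\|Q\xi\|^{2}=\|T(P)\xi\|^{2}=\langle T(P)^{2}\xi,\xi\rangle ,
\]
and, as $\xi$ was arbitrary, $T(P)=T(P)^{2}$ is a projection. Linearity now forces orthogonality preservation: if $p\perp q$ then $T(p),T(q)$ and $T(p)+T(q)=T(p+q)$ are all projections, which is possible only when $T(p)T(q)=0$. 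Resolving a self-adjoint element through its spectral measure and using that $T$ is bounded, orthogonality preservation upgrades to $T(a^{2})=T(a)^{2}$ for every self-adjoint $a$, i.e.\ $T$ is a unital Jordan $^{*}$-homomorphism of $B(H)$.

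To finish $(a)\Rightarrow(b)$ I would invoke the standard dichotomy for Jordan $^{*}$-homomorphisms over the factor $B(H)$: such a map is either a $^{*}$-homomorphism or a $^{*}$-anti-homomorphism. The main obstacle is to eliminate the anti-homomorphic alternative, and this is exactly where infinite-dimensionality enters. For any isometry $V$ (so $V^{*}V=1$) the bilocal identity gives $\|T(V)\xi\|=\|V\,u_{V,\xi}^{*}\xi\|=\|\xi\|$ for all $\xi$, so $T(V)$ is an isometry. If $T$ were a $^{*}$-anti-homomorphism, then $T(V)^{*}T(V)=T(VV^{*})=T(1-q_{V})=1-T(q_{V})$, where $q_{V}$ is the projection onto $(\operatorname{ran}V)^{\perp}$; comparing with $T(V)^{*}T(V)=1$ forces $T(q_{V})=0$. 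Choosing $V$ to be an isometry of $H$ onto a subspace of infinite codimension, $q_{V}$ is a projection of infinite rank and infinite corank; since $1-q_{V}$ has the same property, both are annihilated by $T$, giving $1=T(1)=T(q_{V})+T(1-q_{V})=0$, a contradiction. Hence $T$ is a $^{*}$-homomorphism, i.e.\ a unital $^{*}$-endomorphism of $B(H)$. I expect the delicate points here to be the clean justification of the Jordan dichotomy and the passage from projection preservation to the Jordan identity.

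For the converse $(b)\Rightarrow(a)$, I would use that a unital $^{*}$-endomorphism is spatially an ampliation, $T(a)=W^{*}(a\otimes 1_{K})W$ for a Hilbert space $K$ and a unitary $W:H\to H\otimes K$. Given $a$ and a unit vector $\xi$, it suffices to produce one unitary $u$ with $u\,a\,u^{*}(\xi)=T(a)(\xi)$; such a $u$ exists as soon as one finds a unit vector $\zeta$ with $\|a\zeta\|=\|T(a)\xi\|$ and $\langle a\zeta,\zeta\rangle=\langle T(a)\xi,\xi\rangle$, because then $u$ can be prescribed on $\operatorname{span}\{\zeta,a\zeta\}$ by $\zeta\mapsto\xi$, $a\zeta\mapsto T(a)\xi$ and extended to a unitary. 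The existence of such $\zeta$ follows from the fact that, $T$ being a unital $^{*}$-homomorphism, the pair $(\langle T(a)\xi,\xi\rangle,\|T(a)\xi\|^{2})=(\langle T(a)\xi,\xi\rangle,\langle T(a^{*}a)\xi,\xi\rangle)$ lies in the closure of the joint numerical range of $(a,a^{*}a)$. Finally, for finite-dimensional $H$ the argument up to the Jordan stage is unchanged, but now $B(H)=M_{n}(\mathbb{C})$ is simple, so $T$ is a unital Jordan $^{*}$-automorphism and hence a $^{*}$-automorphism or a $^{*}$-anti-automorphism; the obstruction above disappears because a finite-dimensional space carries no projection that is simultaneously of infinite rank and infinite corank (equivalently, every isometry is unitary), which is precisely why anti-automorphisms survive in this case.
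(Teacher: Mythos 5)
A preliminary remark on the comparison: the paper offers no proof of this statement --- it is quoted from \cite{Mol2014} with the proof omitted --- so your proposal can only be judged on its own merits and against Moln\'ar's original argument. Your first three stages are correct and essentially standard: unitality, positivity and self-adjointness via innerness of automorphisms of $B(H)$; the neat observation that $T(P)\xi=Q\xi$ with $Q$ a projection forces $\langle T(P)\xi,\xi\rangle=\langle T(P)^{2}\xi,\xi\rangle$, whence $T(P)^{2}=T(P)$; and the upgrade to a Jordan $^*$-homomorphism by orthogonality and spectral approximation. The first genuine gap is the ``standard dichotomy'' you invoke next: a Jordan $^*$-homomorphism from $B(H)$ \emph{into} the factor $B(H)$ need not be a homomorphism or an anti-homomorphism. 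Herstein's dichotomy requires the map to be \emph{onto} a prime ring, and your $T$ is an endomorphism with proper range in general; concretely, fixing a unitary identification $H\cong H\oplus H$ and a transpose on $B(H)$, the map $a\mapsto a\oplus a^{t}$ is a unital Jordan $^*$-monomorphism of $B(H)$ into $B(H)$ which is neither multiplicative nor anti-multiplicative. Maps of exactly this mixed type are what the theorem must exclude, so this step cannot be waved through. What is actually available is St{\o}rmer's theorem \cite{Stor1965}: there is a projection $E$, central in the von Neumann algebra generated by $T(B(H))$, such that $T(\cdot)E$ is a $^*$-homomorphism and $T(\cdot)(1-E)$ is a $^*$-anti-homomorphism. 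The good news is that your isometry argument is the right tool and survives this repair: the bilocal hypothesis gives $T(V)^*T(V)=1$ for every isometry $V$; cutting by $1-E$ and using anti-multiplicativity of that corner gives $T(q)(1-E)=0$ for every projection $q$ of infinite corank, and choosing $q$ with infinite rank and infinite corank yields $1-E=T(q)(1-E)+T(1-q)(1-E)=0$. So the forward implication is fixable, but not as you wrote it.

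The second gap concerns the converse $(b)\Rightarrow(a)$ and the finite-dimensional ``if''. Your argument places the pair $\left(\langle T(a)\xi,\xi\rangle,\|T(a)\xi\|^{2}\right)$ only in the \emph{closure} of the joint numerical range of $(a,a^{*}a)$, but the bilocal property demands an exact unitary, hence exact attainment by some unit vector $\zeta$; closures only give approximating unitaries $u_{n}$ with $u_{n}au_{n}^{*}\xi\to T(a)\xi$, which is strictly weaker. What is really needed (write the image of $\xi$ under the identification $H\cong H\otimes K$ in Schmidt form to see it) is that every countable convex combination $\sum_{n}t_{n}\left(\langle a\alpha_{n},\alpha_{n}\rangle,\|a\alpha_{n}\|^{2}\right)$, with $\{\alpha_{n}\}$ orthonormal, is attained exactly at a single unit vector --- a nontrivial convexity-and-attainment theorem for the joint numerical range of the three Hermitian operators $\mathrm{Re}\,a$, $\mathrm{Im}\,a$, $a^{*}a$, not a consequence of taking closures. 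Finally, the finite-dimensional assertion is an equivalence and you prove only one direction: you must also show that every $^*$-anti-automorphism $a\mapsto u_{0}a^{t}u_{0}^{*}$ of $M_{n}(\mathbb{C})$ \emph{is} a bilocal $^*$-automorphism, i.e.\ for each $a$ and each single vector $\xi$ there is a unitary $v$ with $u_{0}a^{t}u_{0}^{*}\xi=vav^{*}\xi$. Remarking that your infinite-rank obstruction disappears is not a proof of this attainment statement; it is genuinely delicate --- indeed Theorem \ref{t non bilocally unitarily equivalent operators} of the present paper shows that the analogous statement for \emph{two} vectors fails whenever $\dim H\geq 3$, so the one-vector case sits right at the edge and needs its own argument.
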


We know that the linear bijection $T: M_2 (\mathbb{C}) \to M_2 (\mathbb{C})$, $T(a) = a^{t}$, where $a^{t}$ denotes the transpose of $a$, is a local $^*$-automorphism, and a Jordan $^*$-automorphism, but it is not multiplicative (compare \cite[Example 3.14]{Pe2014}). So, the conclusion of the above Theorem cannot be improved easily.\smallskip

Bilocal $^*$-automorphisms can be englobed in a wider class of linear maps which are called \emph{strong-local $^*$-automorphisms} (cf. \cite{BenAliPeraltaRamirez}). Given a C$^*$-algebra $A$, we denote by $S(A)$ the set of all states on $A$ (i.e. the set of all norm-one, positive functionals in $A^*$). For each $\phi\in S(A)$, the assignment $a\mapsto \||a|\|_{\phi} = \phi (a^* a)^{\frac12},$ defines a pre-Hilbertian seminorm on $A$. A linear mapping $T: A \to A$ is called a \emph{strong-local $^*$-automorphism} if for every $a\in A$, and every state $\phi\in S(A)$, there exists a $^*$-automorphism $\pi_{a,\phi}: A\to A,$ depending on $a$ and $\phi$, such that $$\||T(a) - \pi_{a,\phi} (a)|\|_{\phi }=0.$$ With a little abuse of notation, when in the above definition, $A$ is a von Neumann algebra $M$ and the set $S(A)$ of states of $A$ is replaced with the set $S_n(M)$ of all normal states on $M$, we also employ the term \emph{strong-local $^*$-automorphism} on $M$ (cf. \cite{BenAliPeraltaRamirez}).\smallskip

There exists another interesting subclass of the class of strong-local $^*$-automorphisms defined as follows: Let $M$ be von Neumann algebra. A linear mapping $T: M \to M$ is said to be an \emph{extreme-strong-local $^*$-automorphism} if for every $a\in M$, and every pure normal state $\phi\in \partial_{e} (S_n(M))$, there exists a $^*$-automorphism $\pi_{a,\phi}: M\to M,$ depending on $a$ and $\phi$, such that $$\||T(a) - \pi_{a,\phi} (a)|\|_{\phi }=0,$$ (compare \cite{BenAliPeraltaRamirez}).\smallskip

It is shown in \cite{BenAliPeraltaRamirez} that bilocal $^*$-automorphisms and extreme-strong-local $^*$-automorphisms on $B(H)$ define the same applications.\smallskip

Theorem 4.3 in \cite{BenAliPeraltaRamirez} established that every extreme-strong-local $^*$-auto-morphism\hyphenation{auto-morphism} on an atomic von Neumann algebra is a Jordan $^*$-homomorphism. The conclusion of this result is no longer true for general von Neumann algebras, even nor for von Neumann algebras which are bidual Banach spaces (compare \cite[comments preceding Theorem 4.3]{BenAliPeraltaRamirez}).\smallskip

In an attempt to find additional hypothesis to determine when a bilocal $^*$-automorphism is a $^*$-homomorphism (i.e. multiplicative),  Moln\'{a}r introduced in \cite[Corollary 2]{Mol2014} the following ``3-local property'': a linear mapping $T: B(H)\to B(H)$ satisfies the \emph{3-local property} if for every $a$ in $B(H)$ and every $\xi,\eta$ in $H$, there exists a $^*$-automorphism $\pi_{a,\xi,\eta}: B(H)\to B(H)$, depending on $a$, $\xi$, and $\eta$ such that $$T(a) (\xi) = \pi_{a,\xi,\eta} (a) (\xi), \hbox{ and } T(a) (\eta) = \pi_{a,\xi,\eta} (a) (\eta).$$ Clearly every linear mapping with the 3-local property also is a  bilocal $^*$-automorphism or a extreme-strong-local $^*$-automorphism.\smallskip

Corollary 2 in \cite{Mol2014} asserts that, for an infinite dimensional separable complex Hilbert space $H$, every linear mapping $T: B(H)\to B(H)$ satisfying the 3-local property is a $^*$-homomorphism. The problem whether, in \cite[Corollary 2]{Mol2014}, the same conclusion remains or not true when $H$ is finite dimensional or infinite dimensional and non-separable, was left as an open problem by Moln\'{a}r (cf. \cite[pages 87-88]{Mol2014}). It was already noticed in \cite{Mol2014}, and in this introduction, that the linear bijection $T: M_2 (\mathbb{C}) \to M_2 (\mathbb{C})$, $T(a) = a^{t}$ is a local $^*$-automorphism which is not multiplicative (compare \cite[Example 3.14]{Pe2014}), so the result is not true for when $H$ is $2$-dimensional.\smallskip

In this paper we provide a complete positive answer to the above question for any complex Hilbert space $H$ with dimension bigger or equal than three (see Theorem \ref{t solution Molnar}). The proof is based in an independent result which shows that for every complex Hilbert space $H$ with dim$(H)\geq 3$, we can always find an operator $a\in B(H)$ which is not bilocally unitarily equivalent to its transpose, that is there exist $\xi_0$ and $\eta_0$ in $H$ satisfying $$\| a_0^t (\eta_0) - u a_0 u^* (\eta_0)\| + \| a_0^t (\xi_0) -u a_0 u^* (\xi_0)\| \neq 0,$$ for every unitary $u\in B(H)$ (cf. \ref{t non bilocally unitarily equivalent operators}).

\section{2-local unitarily equivalent matrices and operators}

Let $B(H)$ denote the C$^*$-algebra of all bounded linear operators on an arbitrary complex Hilbert space $H$. Following standard notation, we shall say that an operator $a$ in $B(H)$ is \emph{unitarily equivalent to its transpose}, $a^{t}$, when there exists a unitary $u\in B(H)$ satisfying $a = u a^t u^*$. Originated by a question of P.R. Halmos, who asked in \cite[Proposition 159]{Hal54} whether every square complex matrix is unitarily equivalent to its transpose, a question completely solved by W. Specht \cite{Specht}, the study of matrices which are unitarily equivalent to their transposes attracted the attention of many researchers. We refer to the papers \cite{GarTener2012,IkraAbdi2012} as good recent references on results about matrices which are unitarily equivalent to their transposes.\smallskip

We are interested in a priori weaker property: the operator $a$ is \emph{bilocally unitarily equivalent to its transpose} when for every $\xi,\eta\in H$, there exists a unitary $u_{\xi,\eta}\in B(H)$, depending on $\xi$ and $\eta$, such that $$ a(\xi) = u_{\xi,\eta} a^t u_{\xi,\eta}^* (\xi),\hbox{ and } a (\eta) =u_{\xi,\eta} a^t u_{\xi,\eta}^* (\eta).$$

Our first Theorem shows the existence of operators which are not bilocally unitarily equivalent to their transposes.

\begin{theorem}\label{t non bilocally unitarily equivalent operators} Let $H$ be a complex Hilbert space with dim$(H)\geq 3$. Then there exists $a\in B(H)$ which is not bilocally unitarily equivalent to its transpose. Furthermore, suppose $K$ is a closed subspace of $H$ with dim$(K)\geq 3$, and let $p$ denote the orthogonal projection of $H$ onto $K.$ Then there exists $a\in B(H)$ with $a= pap,$ which is not bilocally unitarily equivalent to its transpose in $B(H)$.
\end{theorem}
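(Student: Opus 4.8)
The plan is to find an explicit operator $a$ that is "rigid enough" that no unitary conjugation can simultaneously match its action and the action of $a^t$ on two well-chosen vectors. I would work first in the three-dimensional case, since the general statement then follows by embedding and padding, and I would try to exploit the distinction between a matrix and its transpose at the level of \emph{numerical} invariants that a single unitary $u$ must preserve on a pair of vectors.

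\textbf{Construction of the candidate operator.} First I would pick a concrete $3\times 3$ matrix $a_0$ which is known \emph{not} to be unitarily equivalent to its transpose; such matrices exist (by the failure of Halmos's question, settled via Specht's criterion). A natural choice is a nilpotent or weighted-shift-type matrix with distinct off-diagonal weights, e.g.\ $a_0 = \sum_{i} w_i E_{i,i+1}$ with carefully chosen $w_i$, so that $a_0$ and $a_0^t$ fail to share the Specht trace invariants. For the second, "corner," assertion I would set $a = p a_0 p$ after fixing an orthonormal basis $\{e_1,e_2,e_3\}$ spanning a three-dimensional subspace of $K$ and extending $a_0$ by zero on $K^\perp$; since $a = pap$ is automatic from this construction, the corner statement reduces to the first once the bilocal argument is carried out inside the three-dimensional block.

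\textbf{The bilocal obstruction.} The heart of the proof is to produce two vectors $\xi_0,\eta_0$ for which no single unitary works. The key observation is that if $u a^t u^*(\xi)=a(\xi)$ and $u a^t u^*(\eta)=a(\eta)$ held, then on the subspace generated by $\xi,\eta$ the operators $a$ and $u a^t u^*$ would agree, which constrains inner products such as $\langle a\xi,a\eta\rangle$, $\langle a\xi,\xi\rangle$, and the norms $\|a\xi\|$, $\|a\eta\|$ to coincide with the corresponding quantities for $u a^t u^*$. Since $u$ is unitary, these latter quantities equal $\langle a^t u^*\xi, a^t u^*\eta\rangle$, and so on, for the \emph{same} pair $u^*\xi,u^*\eta$. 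The plan is to choose $\xi_0,\eta_0$ (for instance two of the basis vectors $e_i$, or simple combinations thereof) so that the joint data $\big(\|a\xi_0\|,\|a\eta_0\|,\langle a\xi_0,a\eta_0\rangle,\langle a\xi_0,\xi_0\rangle,\dots\big)$ read off from $a$ is incompatible with \emph{any} pair of vectors having the prescribed Gram relations to each other under $a^t$. Concretely, I would compute these invariants for $a$ and show that the corresponding system of equations forcing a unitary $u$ to exist has no solution, typically because $a$ and $a^t$ differ in some scalar invariant (a trace, a commutator norm, or an off-diagonal modulus) that is already detected on the two-dimensional span of the chosen vectors.

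\textbf{Main obstacle.} The delicate point is that bilocal equivalence is genuinely weaker than global unitary equivalence: for each fixed pair $(\xi,\eta)$ the matching unitary $u_{\xi,\eta}$ may vary, so it is not enough to know that $a$ is globally inequivalent to $a^t$. I must therefore exhibit \emph{specific} $\xi_0,\eta_0$ and prove the non-existence of $u$ for that fixed pair, which means the chosen invariants must be sensitive to the local data on $\mathrm{span}\{\xi_0,\eta_0\}$ alone rather than to the full spectral picture. I expect the main work to lie in selecting the weights $w_i$ and the vectors $\xi_0,\eta_0$ so that this finite set of algebraic constraints is provably inconsistent, and in confirming that the same block computation survives intact under the embedding $a=pap$ into the larger space $H$, where vectors may have components in $K^\perp$ on which both $a$ and $a^t$ vanish.
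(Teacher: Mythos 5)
Your overall plan coincides with the paper's own proof: the paper also takes a nilpotent weighted shift on a three-dimensional block, namely $a=\xi_1\otimes\xi_2+2\,\xi_2\otimes\xi_3$ (weights $1$ and $2$, with $\xi_1,\xi_2,\xi_3$ taken inside $K$ for the corner statement, so that $a=pap$ automatically), and it derives a contradiction from the agreement of the two operators on a single, explicitly chosen pair of basis vectors. So the construction, the Gram-data reformulation of the obstruction, and the remark that global inequivalence to the transpose is not enough are all on target. The genuine gap is that the decisive step is never carried out: you do not commit to specific weights $w_i$, nor to a specific pair $(\xi_0,\eta_0)$, nor do you prove that the resulting finite system of constraints is inconsistent --- you explicitly defer this as ``the main work.'' That deferred work is the entire mathematical content of the theorem, and it does not follow from the invariants as you state them: a discrepancy in Specht trace invariants is a global statement and, by itself, says nothing about any fixed pair of vectors, which is precisely the difficulty you yourself identify in your last paragraph but do not resolve.

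For comparison, here is how the paper closes this gap, and it is short. With weights $(1,2)$, take the pair $(\xi_3,\xi_2)$. The matching condition at $\xi_3$ reads $u\,a^t(\xi_3)=a\,u(\xi_3)$ with $a^t(\xi_3)=0$, forcing $u(\xi_3)\in\ker(a)=\{\xi_2,\xi_3\}^{\perp}$; the condition at $\xi_2$ gives $2u(\xi_3)=u\,a^t(\xi_2)=a\,u(\xi_2)\in a(H)=\mathbb{C}\xi_1\oplus\mathbb{C}\xi_2$. Combining the two, $2u(\xi_3)=\mu_1\xi_1$ with $|\mu_1|=2$. On the other hand, expanding $u(\xi_2)=\sum_k\lambda_k\xi_k$ in the basis yields $a\,u(\xi_2)=\lambda_2\xi_1+2\lambda_3\xi_2$, whence $\lambda_3=0$ and $\lambda_2=\mu_1$, which is impossible because $|\lambda_2|\le\|u(\xi_2)\|=1<2$. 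Note the mechanism your proposal still has to find: the pair must couple the kernel direction of $a^t$ with the vector carrying the larger weight, so that unitarity (every coefficient of a unit vector is at most $1$) collides with the weight $2$; this kernel--range rigidity, not a trace-type invariant, is what is ``detected on the span of the chosen vectors.'' Finally, since this computation only uses expansions in a basis of $H$ extending $\{\xi_1,\xi_2,\xi_3\}$ and never assumes the unitary preserves $K$, it is insensitive to components in $K^{\perp}$, which settles the corner statement exactly as you anticipated.
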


\begin{proof} Let us take an orthonormal basis $\{\xi_1,\xi_2,\xi_3\}\cup \{\xi_j\}_{j\in J}$ of $H$. We observe that $J$ can be an empty, a finite or an infinite set. We consider the operator $a = \xi_1 \otimes \xi_2 + 2 \xi_2\otimes \xi_3\in B(H)$. We shall prove that $a$ is not bilocally unitarily equivalent to its transpose. Suppose, on the contrary, that $a$ is bilocally unitarily equivalent to its transpose.  By assumptions, for each $\eta\in H$ there exists a unitary $u_{\xi_3,\eta}\in B(H)$ satisfying $$0=u_{\xi_3,\eta} a^t(\xi_3) = a u_{\xi_3,\eta} (\xi_3),$$ which implies that  $u_{\xi_3,\eta} (\xi_3)\in \ker (a) = \{\xi_2,\xi_3\}^{\perp}.$ This shows that \begin{equation}\label{eq perp to xi2 3} \langle u_{\xi_3,\eta} (\xi_3) / \xi_i \rangle =0,
\end{equation} for every $i=2,3$, and $\eta\in H$.\smallskip

By assumptions, there is a unitary $u_{\xi_3,\xi_2}\in B(H)$ satisfying $$u_{\xi_3,\xi_2} a^t(\xi_3) = a u_{\xi_3,\xi_2} (\xi_3),$$ and $$ u_{\xi_3,\xi_2} a^t(\xi_2) = a u_{\xi_3,\xi_2} (\xi_2).$$ Since $2 u_{\xi_3,\xi_2} (\xi_3)= u_{\xi_3,\xi_2} a^t(\xi_2) = a u_{\xi_3,\xi_2} (\xi_2)\in a(H) = \mathbb{C} \xi_1\oplus\mathbb{C} \xi_2,$ we deduce that $$ 2 u_{\xi_3,\xi_2} (\xi_3)= \mu_1 \xi_1 + \mu_2 \xi_2,$$ where $|\mu_1|^2 + |\mu_2|^2 = 4$, because $u_{\xi_3,\xi_2}$ is a unitary. It follows from \eqref{eq perp to xi2 3} that $2 u_{\xi_3,\xi_2} (\xi_3)= \mu_1 \xi_1 $ with $|\mu_1|=2$.\smallskip

On the other hand $$u_{\xi_3,\xi_2} (\xi_2) = \lambda_1 \xi_1 +\lambda_2 \xi_2 +\lambda_3 \xi_3 + \sum_{j\in J} \lambda_j \xi_j, $$ where the family $(\lambda_j)_{j\in J}$ is sumable and $\displaystyle \sum_{j\in J} |\lambda_j|^2 + |\lambda_1|^2 + |\lambda_2|^2 + |\lambda_3|^2 =1,$ because $u_{\xi_3,\xi_2}$ is a unitary. Therefore $$\mu_1 \xi_1=2 u_{\xi_3,\xi_2} (\xi_3)=  a u_{\xi_3,\xi_2} (\xi_2) = \lambda_2 \xi_1 + 2 \lambda_3 \xi_2,$$ and hence $\lambda_3=0$, and $\mu_1 \xi_1= \lambda_2 \xi_1$, which is impossible because $2=|\mu_1| = |\lambda_2| \leq 1$.
\end{proof}

We can state now a result which provides a complete solution to the question posed by Moln\'{a}r in \cite[pages 87-88]{Mol2014}.

\begin{theorem}\label{t solution Molnar} Let $H$ be a complex Hilbert space with dim$(H)\geq 3$. Let $T: B(H)\to B(H)$ be a linear mapping satisfying the 3-local property, that is, for every $\xi,\eta\in H$, and every $a\in B(H)$, there exists a $^*$-automorphism $\pi_{\xi,\eta,a} : B(H) \to B(H)$ satisfying $$T(a) (\xi) =  \pi_{\xi,\eta,a} (a) (\xi), \hbox{ and, } T(a) (\eta) =  \pi_{\xi,\eta,a} (a) (\eta),$$ equivalently {\rm(}compare \cite[Theorem A.8]{Mol07}{\rm)}, for every $\xi,\eta\in H$, and every $a\in B(H)$, there exists a unitary $u_{\xi,\eta,a}$ in  $B(H)$ satisfying $$T(a) (\xi) =  u_{\xi,\eta,a} a u_{\xi,\eta,a}^* (\xi), \hbox{ and } T(a) (\eta) =  u_{\xi,\eta,a} a u_{\xi,\eta,a}^* (\eta).$$ Then $T$ is a $^*$-monomorphism.
\end{theorem}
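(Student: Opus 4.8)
The plan is to reduce the statement to the structure theory of Jordan $^*$-homomorphisms and then to exploit Theorem~\ref{t non bilocally unitarily equivalent operators}. First I would observe that $T$ is unital: evaluating the $3$-local identity with $a=\uno$ and $\eta=\xi$ gives $T(\uno)(\xi)=u_{\xi,\xi,\uno}\,\uno\,u_{\xi,\xi,\uno}^{*}(\xi)=\xi$ for every $\xi\in H$. Next, since a map with the $3$-local property is in particular an extreme-strong-local $^*$-automorphism and $B(H)$ is an atomic von Neumann algebra, Theorem~4.3 in \cite{BenAliPeraltaRamirez} shows that $T$ is a (unital) Jordan $^*$-homomorphism. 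It then remains to prove that $T$ is multiplicative and injective.

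For multiplicativity I would use the classical decomposition of a Jordan $^*$-homomorphism: letting $M$ be the von Neumann algebra generated by $T(B(H))$, there is a central projection $z\in M$ such that $\pi_{1}(\cdot)=z\,T(\cdot)$ is a $^*$-homomorphism and $\pi_{2}(\cdot)=p\,T(\cdot)$ is a $^*$-anti-homomorphism, where $p=1-z$, and the two ranges are orthogonal. Composing $\pi_{2}$ with the transpose $\tau(x)=x^{t}$ determined by the chosen basis yields an honest $^*$-homomorphism $\rho=\pi_{2}\circ\tau:B(H)\to pB(H)p$ with $\pi_{2}(a)=\rho(a^{t})$ and $\rho(\uno)=p$. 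Thus on the $T$-invariant corner $pH$ the map $T$ reproduces, through a homomorphism, the transpose of each operator, and multiplicativity of $T$ is exactly the assertion $p=0$.

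To prove $p=0$, suppose $p\neq0$. If $\pi_{2}$ does not annihilate $K(H)$, then the nondegenerate part of $\rho|_{K(H)}$ is an ampliation, so there is an isometry $V:H\to pH$ with $\pi_{2}(a)V=Va^{t}$ for all $a$, while $\pi_{1}(a)V=0$ because $VH\subseteq pH=(1-z)H$; hence $V^{*}T(a)V=a^{t}$. Taking the finite-rank operator $a_{0}$ of three-dimensional support and the vectors $\xi_{0},\eta_{0}$ furnished by Theorem~\ref{t non bilocally unitarily equivalent operators}, one gets $T(a_{0})(V\xi_{0})=Va_{0}^{t}\xi_{0}$ and $T(a_{0})(V\eta_{0})=Va_{0}^{t}\eta_{0}$. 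Applying the $3$-local property at the pair $(V\xi_{0},V\eta_{0})$ produces a unitary $u$ with $Va_{0}^{t}\xi_{0}=ua_{0}u^{*}V\xi_{0}$ and $Va_{0}^{t}\eta_{0}=ua_{0}u^{*}V\eta_{0}$, so that the isometry $s=u^{*}V$ satisfies $a_{0}^{t}\xi_{0}=s^{*}a_{0}s\,\xi_{0}$ and $a_{0}^{t}\eta_{0}=s^{*}a_{0}s\,\eta_{0}$. Because $a_{0}$ has three-dimensional support, $s$ need only be controlled on a finite-dimensional subspace, so it may be completed to a unitary; writing $w=s^{*}$ for that completion, $wa_{0}w^{*}$ agrees with $a_{0}^{t}$ at $\xi_{0}$ and $\eta_{0}$, contradicting Theorem~\ref{t non bilocally unitarily equivalent operators}. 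I expect this isometry-to-unitary completion to be the main obstacle, and it is precisely why a finite-rank operator of the form $a_{0}=pa_{0}p$ from Theorem~\ref{t non bilocally unitarily equivalent operators} is needed. The remaining possibility, $\pi_{2}(K(H))=0$ with $p\neq0$, is discarded separately: it would present a proper quotient of $B(H)$ as acting faithfully on $pH\subseteq H$, which is impossible since such a quotient carries more pairwise orthogonal nonzero projections than can exist in $B(H)$. Therefore $p=0$ and $T$ is a $^*$-homomorphism.

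Finally, for injectivity I would use that $K(H)$ is simple, so the $^*$-homomorphism $T|_{K(H)}$ is either zero or faithful; the zero alternative is excluded by the same impossibility, as it would realise a proper quotient of $B(H)$ faithfully on $H$. Hence $T$ is faithful on $K(H)$, and since every nonzero norm-closed ideal of $B(H)$ meets $K(H)$ nontrivially, $\ker T=\{0\}$. Together with multiplicativity this shows that $T$ is a $^*$-monomorphism.
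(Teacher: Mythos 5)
Your core argument is correct as far as it goes, and it takes a genuinely different route from the paper's. The paper passes from ``Jordan $^*$-homomorphism'' to the dichotomy ``$^*$-homomorphism or $^*$-anti-homomorphism'' by citing St{\o}rmer/Kadison-type results, and then kills the anti-homomorphic alternative by restricting $T$ to a corner $pB(H)p\equiv M_n(\mathbb{C})$, applying \cite[Theorem A.8]{Mol07} there, and invoking the \emph{second} statement of Theorem \ref{t non bilocally unitarily equivalent operators} (the operator $a_0=pa_0p$). You instead use St{\o}rmer's central-projection decomposition $T=\pi_1+\pi_2$, which has the advantage of also covering ``mixed'' Jordan maps such as $a\mapsto a\oplus a^{t}$ (a unital Jordan $^*$-monomorphism of $B(H)$ into $B(H\oplus H)\cong B(H)$ that is neither multiplicative nor anti-multiplicative, and which the quoted dichotomies, all requiring surjectivity onto a prime algebra or factor, do not exclude). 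Your construction of the intertwining isometry $V$ with $T(a)V=Va^{t}$ from the representation theory of $K(H)$ is valid, and so is the endgame: the $3$-local property at $(V\xi_0,V\eta_0)$ gives the isometry $s=u^{*}V$, and $s$ can indeed be replaced by a unitary because only the finite-dimensional subspace spanned by $\xi_0,\eta_0,a_0^{t}\xi_0,a_0^{t}\eta_0$ needs to be matched (finite rank of $a_0$ is not even required). This part only uses the first statement of Theorem \ref{t non bilocally unitarily equivalent operators}.

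The genuine gap is the cardinality argument you invoke twice, to dispose of the cases $\pi_2(K(H))=0$ (with $p\neq 0$) and $T(K(H))=0$: the claim that a proper quotient of $B(H)$ cannot act faithfully on a Hilbert space of dimension at most $\dim H$ because it ``carries more pairwise orthogonal nonzero projections than can exist in $B(H)$.'' This is correct for separable $H$ (an uncountable almost disjoint family of infinite subsets of $\mathbb{N}$ yields uncountably many pairwise orthogonal nonzero projections in the Calkin algebra, whereas $B(H)$ holds only countably many) --- but the separable infinite-dimensional case is exactly Moln\'ar's theorem; the new content of Theorem \ref{t solution Molnar} is the non-separable and finite-dimensional cases, and in the non-separable case your counting claim fails. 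Indeed, if $\dim H=\kappa$ satisfies $\kappa^{\aleph_0}=\kappa$ (for instance $\kappa=2^{\aleph_0}$), then \emph{every} family of pairwise orthogonal nonzero projections in $B(H)/K(H)$ has cardinality at most $\kappa$: lift them to projections $P_i\notin K(H)$ with $P_iP_j\in K(H)$ for $i\neq j$ (projections lift modulo the compacts), choose inside each range $P_i(H)$ a separable infinite-dimensional closed subspace $S_i$, and observe that $S_i=S_j$ for $i\neq j$ would force $P_iP_j$ to act as the identity on $S_i$, contradicting compactness; hence $i\mapsto S_i$ is injective, and $H$ possesses only $|H|^{\aleph_0}=\kappa$ such subspaces. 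So for such $\kappa$ the projection count gives no obstruction at all when the kernel is $K(H)$, and I know of no ZFC substitute for the statement you need (for larger kernels one would need almost disjoint families of size $>\kappa$ with correspondingly large sets and small intersections, whose existence is delicate). The paper avoids representations of quotients entirely, disposing of the kernel algebraically: $\ker T$ is a norm-closed Jordan $^*$-ideal, hence an associative closed ideal by \cite{CivYood65}, which is then discarded (a step that itself deserves scrutiny, since $B(H)$ does have proper nonzero closed ideals). Your proof needs an analogous replacement for both occurrences of the counting argument before it proves the theorem in the cases it was actually written for.
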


Before proving the above theorem we state a technical result borrowed from \cite[Proof of Corollary 1]{Mol2014}, the proof is included here for completeness reasons.

\begin{lemma}\label{l tech lemma rank one} Let $T: B(H)\to B(H)$ be a linear mapping satisfying the 3-local property, where $H$ is a complex Hilbert space. Then $T$ maps rank-one operators to rank-one operators.
\end{lemma}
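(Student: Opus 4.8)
The plan is to reduce the claim to a statement about the dimension of the range of $T(a)$, exploiting that conjugation by a unitary preserves the rank of an operator. Writing a rank-one operator as $\zeta\otimes\psi$, so that $(\zeta\otimes\psi)(x)=\langle x,\psi\rangle\,\zeta$, has range $\mathbb{C}\zeta$ and kernel $\psi^{\perp}$, the decisive observation is that $\pi_{a,\xi,\eta}(a)=u_{a,\xi,\eta}\,a\,u_{a,\xi,\eta}^{*}=(u\zeta)\otimes(u\psi)$ is again rank-one, hence has one-dimensional range, whatever the unitary $u=u_{a,\xi,\eta}$ furnished by the 3-local property happens to be.

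Fix a rank-one $a\in B(H)$ and arbitrary $\xi,\eta\in H$. I would invoke the 3-local property in its unitary formulation to obtain a unitary $u=u_{a,\xi,\eta}$ with $T(a)(\xi)=uau^{*}(\xi)$ and $T(a)(\eta)=uau^{*}(\eta)$. Since $\mathrm{ran}(uau^{*})$ is the one-dimensional space $\mathbb{C}(u\zeta)$, the two vectors $T(a)(\xi)$ and $T(a)(\eta)$ lie on a common line and are therefore linearly dependent. As $\xi$ and $\eta$ were arbitrary, any two vectors in $\mathrm{ran}(T(a))$ are linearly dependent, which forces $\dim\mathrm{ran}(T(a))\le 1$; that is, $T(a)$ has rank at most one. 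This linear-dependence step is the whole substance of the argument, and it is routine once the rank-preservation of unitary conjugation is recorded.

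The delicate point, and the step I expect to be the main obstacle, is to exclude $T(a)=0$, so as to conclude that $T(a)$ has rank exactly one. When $\dim(H)=2$ this is immediate: choosing $\xi,\eta$ linearly independent we would need both to lie in $\ker(uau^{*})=(u\psi)^{\perp}$, a one-dimensional subspace, which is impossible. For $\dim(H)\ge 3$ the pointwise data are, on their own, consistent with $T(a)$ vanishing, since any two prescribed vectors always fit inside the hyperplane $\ker(uau^{*})$; nondegeneracy therefore cannot be read off a single pair $\xi,\eta$ and must instead be drawn from the interaction of the 3-local property with the linearity of $T$ across a family of rank-one operators. I would close the gap by arguing that $T$ cannot annihilate a nonzero rank-one operator, and I flag this nonvanishing as the genuinely non-trivial part of the lemma; the rank-at-most-one conclusion obtained above is already in the form needed for the proof of Theorem \ref{t solution Molnar}.
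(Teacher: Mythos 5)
Your linear-dependence argument is precisely the paper's entire proof: pass to the unitary formulation of the 3-local property, observe that $u a u^{*}$ is again rank one, and conclude that $T(a)(\xi)$ and $T(a)(\eta)$ lie on a common line for all $\xi,\eta$, so that $T(a)$ has rank at most one. The ``delicate point'' you flag, namely excluding $T(a)=0$, is not addressed in the paper either: its proof stops exactly where yours does, in effect reading the conclusion as ``rank at most one''. Your diagnosis of why this cannot be repaired from the 3-local data at $a$ alone is also correct --- when $\dim(H)\geq 3$ any two prescribed vectors fit inside some hyperplane $(u\psi)^{\perp}$, so the pointwise identities are consistent with $T(a)=0$. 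And, as you anticipate, nothing is lost downstream: in the proof of Theorem \ref{t solution Molnar} the lemma is invoked only for the minimal projections $p_{j}$ \emph{after} $T$ has already been shown to be a Jordan $^*$-monomorphism, so each $T(p_{j})$ is a nonzero projection of rank at most one, hence a minimal projection. In short, your proposal reproduces the paper's argument in full, and the nonvanishing you were unable to establish is neither established nor needed in the paper.
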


\begin{proof} Let $a$ be a rank-one operator in $B(H).$ Given $\xi,\eta$ in $H$, there exists a unitary $u_{\xi,\eta,a}$ in  $B(H)$ satisfying $$T(a) (\xi) =  u_{\xi,\eta,a} a u_{\xi,\eta,a}^* (\xi), \hbox{ and } T(a) (\eta) =  u_{\xi,\eta,a} a u_{\xi,\eta,a}^* (\eta).$$ This implies that $T(a) (\xi)$ and $T(a) (\eta)$ are linearly dependent for every $\xi$ and $\eta$ in $H$, which concludes the proof.
\end{proof}

\begin{proof}[Proof of Theorem \ref{t solution Molnar}] 
According to the terminology in \cite{BenAliPeraltaRamirez}, every mapping $T$ in the hypothesis of this Theorem is a extreme-strong-local $^*$-automorphism on $B(H)$. Theorem 4.3 in \cite{BenAliPeraltaRamirez} implies that $T$ is a (continuous) Jordan $^*$-homomorphism on $B(H)$. A remarkable result of E. St{\o}rmer, assures that $T$ is a $^*$-homomorphism or a $^*$-anti-homomorphism (cf. \cite{Stor1965}, see also \cite[Theorem 10]{Kad51}, \cite{Bre89}, and \cite[Appendix]{Mol07}).\smallskip

We note that every extreme-strong-local $^*$-automorphism on an atomic von Neumann algebra is unital, and hence non-zero. Since $T$ is a Jordan $^*$-homomorphism, its kernel, $\ker(T),$ is a norm closed Jordan $^*$-ideal of $B(H).$ Theorem 5.3 in \cite{CivYood65} assures that $\ker(T)$ is a closed (associative) ideal of $B(H)$. Since $B(H)$ is a factor and $T$ is non-zero, we deduce that $\ker(T)=\{0\},$ and hence $T$ is a Jordan $^*$-monomorphism.\smallskip

Suppose now that $H$ is finite dimensional. The arguments above show that $T$ is a Jordan $^*$-automorphism, and hence a $^*$-automorphism or a $^*$-anti-automorphism. Furthermore, by \cite[Theorem A.8]{Mol07}, there exists a unitary $u_0\in B(H)$ satisfying $$T(a) = u_0 a u_0^*, \hbox{ or } T(a) = u_0  a^{t} u_0^*,$$ for every $a\in B(H)$. Suppose that $T(a) = u_0  a^{t} u_0^*,$ for every $a\in B(H)$. Theorem \ref{t non bilocally unitarily equivalent operators} implies the existence of an operator $a_0\in B(H)$ such that $a_0^{t}$  is not bilocally unitarily equivalent to $a_0$, that is, there exist $\xi_0$ and $\eta_0$ in $H$ satisfying \begin{equation}\label{eq non zero at xi0 eta0} \| a_0^t (\eta_0) - u a_0 u^* (\eta_0)\| + \| a_0^t (\xi_0) -u a_0 u^* (\xi_0)\| \neq 0,
 \end{equation} for every unitary $u\in B(H).$ Let us define $\widetilde{\xi}_0 = u_0 (\xi_0)$ and $\widetilde{\eta}_0 = u_0 (\eta_0)$ in $H$. It follows from \eqref{eq non zero at xi0 eta0} that $$\| T(a_0) (\widetilde{\eta}_0)  - u_0 u a_0 u^* u_0^* (\widetilde{\eta}_0)\| + \| T(a_0) (\widetilde{\xi}_0) - u_0 u a_0 u^* u_0^* (\widetilde{\xi}_0)\|$$
$$=\| u_0 a_0^t u_0^* (\widetilde{\eta}_0)  - u_0 u a_0 u^* u_0^* (\widetilde{\eta}_0)\| + \| u_0 a_0^t u_0^* (\widetilde{\xi}_0) - u_0 u a_0 u^* u_0^* (\widetilde{\xi}_0)\|$$ $$ = \| u_0 a_0^t  ({\eta}_0)  - u_0 u a_0 u^*  ({\eta}_0)\| + \| u_0 a_0^t  ({\xi}_0) - u_0 u a_0 u^*  ({\xi}_0)\| $$ $$= \| a_0^t  ({\eta}_0)  - u a_0 u^*  ({\eta}_0)\| + \| a_0^t  ({\xi}_0) - u a_0 u^*  ({\xi}_0)\| \neq 0,$$ for every unitary $u\in B(H),$ which contradicts that $T$ satisfies the  3-local property at the point $a_0$. Therefore, $T$ is a $^*$-automorphism.\smallskip

We finally assume that $H$ is infinite dimensional (non-necessarily separable) and $T$ is a Jordan $^*$-monomorphism satisfying the 3-local property. We have already commented, in the first paragraph, that $T$ is either a $^*$-homomorphism or a $^*$-anti-homomorphism. Arguing by contradiction, we suppose that $T$ is a $^*$-anti-homomorphism.\smallskip

Let $p_1,$ $\ldots$, $p_n$ be mutually orthogonal minimal projections in $B(H)$ with $n\geq 3$. Clearly, $T(p_1), \ldots$ $, T(p_n)$ are mutually orthogonal minimal projections in $B(H)$ (compare Lemma \ref{l tech lemma rank one}). Let $\displaystyle p = \sum_j p_j$. Since $T$ is a $^*$-anti-monomorphism, it maps $pB(H)p\equiv M_n (\mathbb{C})$ into $T(p) B(H) T(p)\equiv M_n (\mathbb{C})$ and $T|_{_{pB(H)p}} : pB(H)p\to T(p) B(H) T(p) $ is a $^*$-anti-monomorphism. In this case, $p_j = \zeta_j\otimes \zeta_j$ and $T(p_j) = \kappa_j\otimes \kappa_j$, where $\{\zeta_1,\ldots, \zeta_n\}$ and $\{\kappa_1,\ldots, \kappa_n\}$ are orthonormal systems in $H$. Let us take a unitary $u_0\in B(H)$ mapping each $\zeta_j$ to $\kappa_j.$\smallskip

The mapping $$u_0^* T|_{_{pB(H)p}} u_0 : pB(H)p \to pB(H)p\equiv M_n (\mathbb{C})$$ also is a $^*$-anti-monomorphism. We deduce from \cite[Theorem A.8]{Mol07}, that there exists a unitary $\widetilde{v}_0: p (H) \to p(H)$ such that $$ u_0^* T(x) u_0 = \widetilde{v}_0^* x^t \widetilde{v}_0,
$$ for every $x\in pB(H)p \equiv B(p(H)).$ We can find a unitary $v_0\in B(H)$ satisfying $v_0 |_{p(H)} =  \widetilde{v}_0,$ and hence \begin{equation}\label{eq unitary in finite dimensional 1} u_0^* T(x) u_0 = {v}_0^* x^t {v}_0, \hbox{ or equivalently, }  T(x) = u_0 {v}_0^* x^t {v}_0 u_0^*,
\end{equation} for every $x\in pB(H)p \equiv B(p(H)).$ Applying the second statement in Theorem \ref{t non bilocally unitarily equivalent operators}, we find an element $a_0\in B(H)$ with $a_0= pa_0 p,$ which is not bilocally unitarily equivalent to its transpose in $B(H)$, that is, there exist $\xi_0$ and $\eta_0$ in $H$ satisfying $$\| a_0^t (\eta_0) - u a_0 u^* (\eta_0)\| + \| a_0^t (\xi_0) -u a_0 u^* (\xi_0)\| \neq 0,$$ for every unitary $u\in B(H),$ a result that, by \eqref{eq unitary in finite dimensional 1}, contradicts the 3-local property of $T$, because $a_0 \in pB(H) p.$
\end{proof}

It should be remarked here that Theorem \ref{t solution Molnar} also completes the conclusions in \cite[\S 4]{BenAliPeraltaRamirez}.


\begin{thebibliography}{22}



\bibitem{BenAliPeraltaRamirez} A. Ben Ali Essaleh, A.M. Peralta, M.I. Ram{\'\i}rez, Weak-local derivations and homomorphisms on C$^*$-algebras, preprint 2014. arxiv:1411.4795v2.



\bibitem{Bre89} M. Bresar, Jordan Mappings of Semiprime Rings, \emph{J. Algebra}, \textbf{127}, 1, 218-228 (1989).


\bibitem{BreSemrl95} M. Bre\v{s}ar, P. \v{S}emrl, On local automorphisms and mappings that preserve idempotents, \emph{Studia Math.} \textbf{113}, no. 2, 101-108  (1995).


\bibitem{CivYood65} P. Civin, B. Yood, Lie and Jordan structures in Banach algebras, \emph{Pacific J. Math.}
\textbf{15}, 775-797 (1965).


\bibitem{GarTener2012} S.R. Garcia, J.E. Tener, Unitary equivalence of a matrix to its transpose, \emph{J. Operator Theory} \textbf{68}, no. 1, 179-203 (2012).


\bibitem{Hal54} P.R. Halmos, \emph{A Linear Algebra Problem Book}, Dolciani Math. Exp., vol. 16, Math. Assoc. America, Washington, DC 1995.

\bibitem{IkraAbdi2012} Kh. D. Ikramov, A.K. Abdikalykov, On unitary transposable matrices of order three, \emph{Translation of Mat. Zametki} \textbf{91}, no. 4, 563-570  (2012). \emph{Math. Notes} \textbf{91}, no. 3-4, 528-534 (2012).



\bibitem{Kad51} R. V. Kadison, Isometries of Operator Algebras, \emph{Ann. of Math.}, Vol. \textbf{54}, No. 2, 325-338  (1951).






\bibitem{LarSou} D.R. Larson and A.R. Sourour, Local derivations and local automorphisms of $B(X)$, \emph{Proc. Sympos. Pure Math.} \textbf{51}, Part 2, Providence, Rhode Island 1990, pp. 187-194.




\bibitem{Mol07} L. Moln\'{a}r, \emph{Selected Preserver Problems on Algebraic Structures of Linear Operators and on Function Spaces, Lecture Notes in Mathematics 1895}, Springer-Verlag, Berlin Heidelberg, 2007.

\bibitem{Mol2014} L. Moln\'{a}r, Bilocal $^*$-automorphisms of $B(H),$ \emph{Arch. Math.} \textbf{102}, 83-89 (2014).


\bibitem{Pe2014} A.M. Peralta, A note on 2-local representations of C$^*$-algebras, to appear in \emph{Operators and Matrices}.






\bibitem{Specht} W. Specht, Zur Theorie der Matrizen. II., \emph{Jber. Deutsch. Math. Verein.} \textbf{50}, 19-23 (1940).



\bibitem{Stor1965} E. St{\o}rmer, On the Jordan Structure of C*-Algebras, Trans. Amer. Math. Soc., \textbf{120}, No. 3, 438-447 (1965).

\bibitem{ZhuXiong97} C. Xiong, J. Zhu, Bilocal derivations of standard operator algebras, \emph{Proc. Amer. Math. Soc.} \textbf{125}, 1367-1370 (1997).

\end{thebibliography}
\end{document}